\newtheorem{thm}[equation]{Theorem}
\newtheorem{cor}[equation]{Corollary}
\newtheorem{prop}[equation]{Proposition}
\theoremstyle{definition}
\numberwithin{equation}{section}
\begin{document}


\baselineskip=17pt



\title[Leibniz's rule on two-step nilpotent Lie groups]{Leibniz's rule \\ on two-step nilpotent Lie groups}
\author[K. Beka{\l}a]{Krystian Beka{\l}a}
\address{Institute of Mathematics
\\ University of Wroc{\l}aw
\\ 50-384 Wroc{\l}aw, Poland}
\email{krystian.bekala@math.uni.wroc.pl}

\date{}

\begin{abstract}
Let $\mathfrak{g}$ be a nilpotent Lie algebra which is also regarded as a homogeneous Lie group with the Campbell-Hausdorff multiplication. This allows to define a generalized multiplication $f \# g = (f^{\vee} * g^{\vee})^{\wedge}$ of two functions in the Schwartz class $\mathcal{S}(\mathfrak{g}^{*})$, where $\vee$ and $\wedge$ are the Abelian Fourier transforms on the Lie algebra $\mathfrak{g}$  and on the dual $\mathfrak{g}^{*}$.

In the operator analysis on nilpotent Lie groups an important notion is the one of symbolic calculus which can be viewed as a higher order generalization of the Weyl calculus for pseudodifferential operators of H{\"o}rmander. The idea of such a calculus consists in describing the product $f \# g$ for some classes of symbols.

We find a formula for $D^{\alpha}(f \# g)$ for Schwartz functions $f,g$ in the case of two-step nilpotent Lie groups, that includes the Heisenberg group. We extend this formula to the class of functions $f,g$ such that $f^{\vee}, g^{\vee}$ are certain distributions acting by convolution on the Lie group, that includes usual classes of symbols.
In the case of the Abelian group $\mathds{R}^{d}$ we have $f \# g = fg$, so $D^{\alpha}(f \# g)$ is given by the Leibniz rule.
\end{abstract}

\subjclass[2010]{Primary 22E25; Secondary 22E15}

\keywords{Leibniz's rule, Heisenberg group, Fourier transform, homogeneous groups, symbolic calculus, convolution}

\maketitle

\section{Statement of the result} \label{secone}
Let $\mathfrak{g}$ be a nilpotent Lie algebra of the dimension $d$ which is endowed with a family of dilations $(\delta_{t})_{t>0}$.
We also regard the vector space $\mathfrak{g}$ as a Lie group with the multiplication law given by the Campbell-Hausdorff formula (see Corwin-Greenleef \cite{CoGr})
\begin{equation*}
x \circ y = x + y + r(x,y),
\end{equation*}
where $r(x,y)$ is the (finite) sum of the commutator terms of order at least 2 in the Campbell-Hausdorff series for $\mathfrak{g}$. 

This allows to define a generalized multiplication $f \# g = (f^{\vee} * g^{\vee})^{\wedge}$ of two functions in the Schwartz class $\mathcal{S}(\mathfrak{g}^{*})$, where $\vee$ and $\wedge$ are the Abelian Fourier transforms on the Lie algebra $\mathfrak{g}$  and on the dual $\mathfrak{g}^{*}$.
In the case of the Abelian group $\mathds{R}^{d}$, one gets $f \# g = fg$.

In the operator analysis on nilpotent Lie groups an important notion is the one of symbolic calculus which can be viewed as a higher order generalization of the Weyl calculus for pseudodifferential operators of H{\"o}rmander \cite{Hor}. The calculus was created in Melin \cite{Mel} and developed in Manchon \cite{Man} and G{\l}owacki \cite{Glo4}, \cite{Glo3}, \cite{Glo1}. The idea of such a calculus consists in describing the product $f \# g$ for some classes of symbols. One of the obstacles in extending Weyl calculus to the ground of general nilpotent Lie groups is the lack of formula allowing to calculate the derivatives of the product $f \# g$.

In the Abelian case, we have the multidimensional Leibniz rule
\begin{equation} \label{leib}
D^{\alpha} (fg) =\sum_{\beta+\gamma=\alpha} {\alpha \choose \beta} D^{\beta} f D^{\gamma} g, \qquad  \alpha \in \mathds{N}^{d}.
\end{equation}

Let $\mathfrak{h}_{n}=\mathds{R}^{2n+1}$ be the Heisenberg Lie algebra with the commutator
\begin{equation*}
[x,y]=(0,....,0,\{x,y\}), \qquad x,y \in \mathfrak{h}_{n},
\end{equation*} 
where $\{x,y\}=\sum_{i=1}^{n} (x_{i}y_{n+i}-x_{n+i}y_{i}))$, and the Heisenberg group with the Campbell-Hausdorff multiplication. In that case there is a simpler form of $f \# g$ (cf. G{\l}owacki \cite{Glo4}, Example 3.3)
\begin{equation} \label{heishas}
f \# g (w,\lambda) = c_{n} \int\int f(w+\lambda^{1 \over 2}u,\lambda) g(w+\lambda^{1 \over 2}v,\lambda) e^{i\{u,v\}} du dv, \ w \in \mathds{R}^{2n}, \lambda>0.
\end{equation}
By the chain rule and integration by parts one gets
\begin{equation} \label{tnpo}
D_{2n+1} (f \# g) =  D_{2n+1}f \# g + f \# D_{2n+1} g + {1 \over 2} \sum_{i=1}^{n} (D_{i}f \# D_{n+i}g - D_{n+i}f \# D_{i}g). 
\end{equation}
A general formula for $D^{\alpha}(f \# g)$, $\alpha \in \mathds{N}^{2n+1}$, seems to be more complicated. 

The purpose of this note is to find such a "Leibniz's formula" in the case of two-step nilpotent Lie groups, that includes the Heisenberg group. By the Fourier transform this is equivalent to find a formula for $T^{\alpha}(f*g)$, where 
$T^{\alpha}f(x)=x^{\alpha}f(x)$ and $*$ is the convolution on the group $\mathfrak{g}$.

In the Abelian case, there is a formula for the convolution product corresponding to (\ref{leib})
\begin{equation} \label{conbin}
T^{\alpha} (f*_{0}g) =\sum_{\beta+\gamma=\alpha} {\alpha \choose \beta} T^{\beta} f *_{0} T^{\gamma}g,
\end{equation}
where $*_{0}$ is the standard convolution on $\mathds{R}^{d}$.

In the general case of nilpotent Lie groups G{\l}owacki \cite{Glo2} showed that   
\begin{equation} \label{gloconv}
T^{\alpha}(f*g)=T^{\alpha}f*g+f*T^{\alpha}g+\sum_{\substack{l(\beta)+l(\gamma)=l(\alpha) \\ 0<l(\beta)<l(\alpha)}} c_{\beta,\gamma}T^{\beta}f*T^{\gamma}g,
\end{equation}
for $\alpha \neq 0$ and Schwartz functions $f$, $g$ on $\mathfrak{g}$.
Here, $c_{\beta,\gamma}$ are real constants and $l(\alpha)$ is the homogeneous length of a multiindex $\alpha$ (see Section {\ref{sectwo}}).
Notice that this formula does not give exact values of $c_{\beta,\gamma}$,
and the condition $l(\beta)+l(\gamma)=l(\alpha)$ does not characterize precisely the pairs $(\beta,\gamma)$ which appear in (\ref{gloconv}) with a nonzero constant term $c_{\beta,\gamma}$.

In order to formulate the main result we introduce some notation.
Let $X_{1},...,X_{d}$ be a base of the vector space $\mathfrak{g}$. Suppose that $A=(a_{i,j,k})_{i,j,k}$ is the matrix of the structure constants of $\mathfrak{g}$ which are given by
\begin{equation*}
[X_{i},X_{j}]=\sum_{k=1}^{d}a_{i,j,k}X_{k}, \quad\ 1 \leq i,j \leq d.
\end{equation*}
Let
$D = \{(i,j,k): a_{i,j,k} \neq 0 \}$ and $\sigma \in \mathds{N}^{D}$.
By $\sigma_{[0]}, \sigma_{[1]}, \sigma_{[2]},  \in \mathds{N}^{d}$ we denote the multiindices
\begin{equation*}
\sigma_{[0],k}= \sum_{i,j} \sigma_{(i,j,k)}, \quad\ 
\sigma_{[1],i}= \sum_{j,k} \sigma_{(i,j,k)}, \quad\
\sigma_{[2],j}= \sum_{i,k} \sigma_{(i,j,k)}.
\end{equation*} 
For $\alpha, \beta \in \mathds{N}^{d}, \sigma \in \mathds{N}^{D}$ and $\beta + \sigma_{[0]} \leq \alpha$ we define the generalized multinomial coefficient
\begin{equation} \label{gmc}
{\alpha \choose \beta}_{\sigma} = {\alpha! \over \beta! \sigma! (\alpha-\beta-\sigma_{[0]})!}.
\end{equation}
Note that in the case of the Abelian group we have $\sigma_{[0]}=\sigma_{[1]}=\sigma_{[2]}=\textbf{0}$ and
${\alpha \choose \beta}_{\sigma}={\alpha \choose \beta}$.

Our main result is the following.
\begin{thm} \label{thmaith}
Suppose that $\mathfrak{g}$ is a two-step nilpotent Lie group with the Campbell-Hausdorff multiplication. For any Schwartz functions $f$, $g$ on $\mathfrak{g}$ and every multiindex $\alpha \in \mathds{N}^{d}$,
\begin{equation} \label{thmain}
T^{\alpha}(f*g)
=\sum_{\beta+\gamma+\sigma_{[0]} = \alpha }
{\alpha \choose \beta}_{\sigma}
c_{\sigma}
T^{\beta+\sigma_{[1]}}f
*T^{\gamma +\sigma_{[2]}}g,
\end{equation}
where the (nonzero) constants $c_{\sigma}$ are given by
\begin{equation*}
c_{\sigma}= 2^{-\sum_{i,j,k} \sigma_{(i,j,k)}} \prod_{i,j,k} a_{i,j,k}^{\sigma_{(i,j,k)}},
\qquad \sigma \in \mathds{N}^{D}.
\end{equation*}
\end{thm}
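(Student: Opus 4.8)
The plan is to compute $T^{\alpha}(f*g)(x)=x^{\alpha}(f*g)(x)$ directly from the group convolution, exploiting the fact that for a two-step nilpotent group the Campbell--Hausdorff series truncates to $x\circ y = x+y+\tfrac{1}{2}[x,y]$, with $[x,y]$ central. Writing the convolution as $(f*g)(x)=\int_{\mathfrak{g}} f(y)\,g(y^{-1}\circ x)\,dy$ and putting $w=y^{-1}\circ x$, so that $x=y\circ w$, the relation $x=y\circ w$ holds identically in the integration variable $y$. In coordinates with respect to the basis $X_{1},\dots,X_{d}$ this reads
\[
x_{k} = y_{k} + w_{k} + \tfrac{1}{2}\sum_{i,j} a_{i,j,k}\, y_{i} w_{j}, \qquad 1\le k\le d.
\]
Hence $x^{\alpha}$, which is constant in $y$, can be rewritten under the integral sign as a polynomial in the pair $(y,w)$, and the whole problem reduces to expanding this polynomial and matching the outcome with the right-hand side of (\ref{thmain}).

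First I would expand each factor $x_{k}^{\alpha_{k}}$ by the trinomial theorem, splitting $\alpha_{k}$ into the exponent $\beta_{k}$ of $y_{k}$, the exponent $\gamma_{k}$ of $w_{k}$, and the exponent $s_{k}$ of the commutator term $\tfrac{1}{2}\sum_{i,j}a_{i,j,k}y_{i}w_{j}$. Expanding that last power once more by the multinomial theorem over the pairs $(i,j)$ introduces, for each $k$, nonnegative integers $\sigma_{(i,j,k)}$ with $\sum_{i,j}\sigma_{(i,j,k)}=s_{k}$; assembled together these form a multiindex $\sigma\in\mathds{N}^{D}$ with $s_{k}=\sigma_{[0],k}$. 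Taking the product over $k$, the monomial in $y$ produced is $y^{\beta}\prod_{i,j,k} y_{i}^{\sigma_{(i,j,k)}}=y^{\beta+\sigma_{[1]}}$ and, likewise, the monomial in $w$ is $w^{\gamma+\sigma_{[2]}}$, while the constraint becomes $\beta+\gamma+\sigma_{[0]}=\alpha$. This is exactly where the definitions of $\sigma_{[0]},\sigma_{[1]},\sigma_{[2]}$ are tailored: they bookkeep the accumulated powers of $y$ and of $w$.

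It then remains to simplify the scalar coefficient and re-assemble the integral. Multiplying the trinomial coefficients $\prod_{k}\alpha_{k}!/(\beta_{k}!\gamma_{k}!\,\sigma_{[0],k}!)$ by the inner multinomial coefficients $\prod_{k}\sigma_{[0],k}!/\prod_{i,j}\sigma_{(i,j,k)}!$ collapses the factors $\sigma_{[0],k}!$ and yields $\alpha!/(\beta!\,\gamma!\,\sigma!)$, which, since $\gamma=\alpha-\beta-\sigma_{[0]}$, is precisely the generalized multinomial coefficient ${\alpha\choose\beta}_{\sigma}$ of (\ref{gmc}); the scalars $\tfrac{1}{2}$ and $a_{i,j,k}$ carried by the commutator terms combine into $c_{\sigma}=2^{-\sum_{i,j,k}\sigma_{(i,j,k)}}\prod_{i,j,k} a_{i,j,k}^{\sigma_{(i,j,k)}}$. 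Substituting the resulting expansion of $x^{\alpha}$ back into the integral and using $y^{\beta+\sigma_{[1]}}f(y)=(T^{\beta+\sigma_{[1]}}f)(y)$ together with $w^{\gamma+\sigma_{[2]}}g(w)=(T^{\gamma+\sigma_{[2]}}g)(w)$ identifies each integral as $(T^{\beta+\sigma_{[1]}}f * T^{\gamma+\sigma_{[2]}}g)(x)$, which gives (\ref{thmain}).

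The hard part will be the double bookkeeping of the combinatorial coefficient: one must carry the multinomial weights through both expansions simultaneously and verify that the powers of the individual variables $y_{i}$ and $w_{j}$ coming from the central commutator terms aggregate exactly into $\sigma_{[1]}$ and $\sigma_{[2]}$, with the ``diagonal'' contributions $y_{k},w_{k}$ simply adding their exponents to the ``off-diagonal'' ones from $[y,w]$. Two-step nilpotency is essential here: it guarantees that the Campbell--Hausdorff product has no terms beyond $\tfrac{1}{2}[x,y]$, so that each $x_{k}$ is an honest quadratic polynomial in $(y,w)$ and the expansion terminates after the two multinomial steps above.
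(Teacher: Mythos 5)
Your proposal is correct and follows essentially the same route as the paper: the paper first isolates the expansion of $(x\circ y)^{\alpha}$ as a standalone multinomial-type identity (Proposition \ref{prmainpr}, proved by exactly the trinomial-then-multinomial bookkeeping you describe) and then applies it to the factorization $x=(x\circ y^{-1})\circ y$ inside the convolution integral. Your writing the convolution as $\int f(y)\,g(y^{-1}\circ x)\,dy$ and using $x=y\circ(y^{-1}\circ x)$ is just the change of variables $y\mapsto x\circ y^{-1}$ away from the paper's version, so the two arguments coincide in substance.
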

An analogous formula for more than two functions is given in Proposition $\ref{encon}$ below. Moreover, in Corollary \ref{mulcons}, we show that the above formula is still valid for tempered distributions whose convolution with the Schwartz class functions is the Schwartz class.

Applying the Fourier transform to (\ref{thmain}) we get an equivalent formula for $D^{\alpha}(f \# g)$ for Schwartz functions $f, g$ on the dual $\mathfrak{g}^{*}$.
We extend this formula to the certain class of functions, that includes the classes of symbols $S^{\textbf{m}}(\mathfrak{g}^{*},\textbf{g})$ which are admissible in calculus of G{\l}owacki \cite{Glo3} (see Subsection \ref{secfifo}). 

In Subsection \ref{secfifi} we illustrate results in the case of the Heisenberg group.

\section{Two-step nilpotent Lie group} \label{sectwo}
Let $\mathfrak{g}$ be a Lie algebra of the dimension $d$ endowed with a family of one-parameter group automorphisms $(\delta_{t})_{t>0}$ which are called dilations. Let $p_{1}=1$, $p_{2}=2$ be the exponents of homogeneity of the dilations. Let
\begin{equation*}
\mathfrak{g}_{1}=\{x \in \mathfrak{g}: \delta_{t}(x)=t^{p_{1}}x\}, \qquad\ 
\mathfrak{g}_{2}=\{x \in \mathfrak{g}: \delta_{t}(x)=t^{p_{2}}x\}.
\end{equation*}
Then $\mathfrak{g}=\mathfrak{g}_{1} \oplus \mathfrak{g}_{2}$ and $\mathfrak{g}$ is a two-step nilpotent Lie algebra. Let $d_{1}=\dim \mathfrak{g}_{1}$.

The vector space $\mathfrak{g}$ is also regarded as a Lie group with the multiplication 
\begin{equation*}
x \circ y = x + y + {1 \over 2}[x,y].
\end{equation*}
The exponential map is then the identity map.
From the antisymmetry and the Jacobi identity,
\begin{equation*}
a_{i,j,k}+a_{j,i,k}=0, \qquad
\sum_{k} (a_{i,j,k}a_{k,l,m}+a_{j,l,k}a_{l,i,m}+a_{l,i,k}a_{k,j,m})=0.
\end{equation*}
Moreover, the homogeneous structure of $\mathfrak{g}$ gives that $a_{i,j,k}=0$ if any of the conditions $i=j$, $\max(i,j) \geq k$, $\max(i,j) > d_{1}$, $k \leq d_{1}$ is satisfied.
For every $k > d_{1}$ we have $(x \circ y)_{k}=x_{k}+y_{k}+r_{k}(x,y)$, where
\begin{equation*}
r_{k}(x,y) = {1 \over 2} \sum_{i=1}^{d}\sum_{j=1}^{d} a_{i,j,k} x_{i}y_{j}.
\end{equation*}

Let $T_{j}f(x)=x_{j}f(x)$, $D_{j}f(x)=i\partial_{j}f(x)$ and
\begin{equation*} 
T^{\alpha}f(x)=x_{1}^{\alpha_{1}}...x_{d}^{\alpha_{d}}f(x), \qquad
D^{\alpha}f(x)=D^{\alpha_{1}}_{1}...D^{\alpha_{d}}_{d}f(x).
\end{equation*}
Let $|\alpha|=\sum_{i=1}^{d} \alpha_{i}$ be the length of $\alpha \in \mathds{N}^{d}$. Let us also denote by $l(\alpha)$ the homogeneous length of $\alpha$, i.e.
\begin{equation*}
l(\alpha)=p_{1}(\alpha_{1}+...+\alpha_{d_{1}})+p_{2}(\alpha_{d_{1}+1}+...+\alpha_{d}).
\end{equation*}

The Schwartz space is denoted by $\mathcal{S}(\mathfrak{g})$.
Let Lebesgue measures $dx, d\xi$ on $\mathfrak{g}$ and $\mathfrak{g}^{*}$ be normalized so that the relationship between a function $f \in \mathcal{S}(\mathfrak{g})$ and its Abelian Fourier transform $\widehat{f} \in \mathcal{S}(\mathfrak{g}^{*})$ is given by
\begin{equation*}
\widehat{f}(\xi)=\int_{\mathfrak{g}} e^{-ix\xi} f(x) dx, \qquad\ 
f(x)=\int_{\mathfrak{g}^{*}} e^{ix\xi} \widehat{f}(\xi) d\xi.
\end{equation*}
The Fourier transform extends by duality to the space of tempered distributions.

A normalized Lebesgue measure on the vector space $\mathfrak{g}$ is a Haar measure on the group $\mathfrak{g}$.
The convolution $*$ on $\mathfrak{g}$ is given by
\begin{equation} \label{congr} 
f*g(x)=\int_{\mathfrak{g}} f(x\circ y^{-1})g(y) dy.
\end{equation}

Recall some notation that we have already introduced in Section {\ref{secone}}. For the group $\mathfrak{g}$ and $\sigma \in \mathds{N}^{D}$ we defined the $d$-dimensional multiindices $\sigma_{[0]}, \sigma_{[1]}, \sigma_{[2]} \in \mathds{N}^{d}$. We also defined the generalized multinomial coefficient ${\alpha \choose \beta}_{\sigma}$ for $\alpha, \beta \in \mathds{N}^{d}$ and $\sigma \in \mathds{N}^{D}$. 
Let us also denote by $c_{\sigma}$ the constants which appeared in (\ref{thmaith}), i.e.
\begin{equation} \label{consts}
c_{\sigma}= 2^{-\sum_{i,j,k} \sigma_{(i,j,k)}} \prod_{i,j,k}a_{i,j,k}^{\sigma_{(i,j,k)}}, \qquad\ \sigma \in \mathds{N}^{D}.
\end{equation}

\section{Leibniz's rule}
\subsection{Multinomial theorem.}
The following proposition is a generalization of the multinomial theorem on $\mathds{R}^{d}$. This will be crucial in the proof of Theorem \ref{thmaith}.
\begin{prop} \label{prmainpr}
For any $x,y \in \mathfrak{g}$ and every multiindex $\alpha \in \mathds{N}^{d}$,
\begin{equation} \label{prmain}
(x \circ y)^{\alpha}
=\sum_{\beta+\gamma+\sigma_{[0]} = \alpha }
{\alpha \choose \beta}_{\sigma}
c_{\sigma}
x^{\beta+\sigma_{[1]}}
y^{\gamma +\sigma_{[2]}}g,
\end{equation}
where the (nonzero) constants $c_{\sigma}$ are given by (\ref{consts}).
\end{prop}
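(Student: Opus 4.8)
The plan is to apply the ordinary multinomial theorem to each coordinate factor of $(x \circ y)^{\alpha}$ and then reorganize the resulting sum. Since $\mathfrak{g}$ is two-step, the explicit form of the multiplication recorded in Section \ref{sectwo} gives, for every $k$,
\[
(x \circ y)_{k} = x_{k} + y_{k} + \frac{1}{2} \sum_{i,j} a_{i,j,k}\, x_{i} y_{j},
\]
where the commutator term is absent unless $k > d_{1}$ and $(i,j,k) \in D$. Writing $(x \circ y)^{\alpha} = \prod_{k=1}^{d} (x \circ y)_{k}^{\alpha_{k}}$, I would expand each factor by the multinomial theorem, treating $x_{k}$, $y_{k}$, and the terms $\frac{1}{2} a_{i,j,k}\, x_{i} y_{j}$ (one for each $(i,j)$ with $(i,j,k) \in D$) as the separate summands raised to the power $\alpha_{k}$.

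First I would introduce, for each $k$, exponents $\beta_{k}$ attached to $x_{k}$, $\gamma_{k}$ attached to $y_{k}$, and $s_{(i,j,k)}$ attached to $\frac{1}{2} a_{i,j,k}\, x_{i} y_{j}$, subject to $\beta_{k} + \gamma_{k} + \sum_{i,j} s_{(i,j,k)} = \alpha_{k}$. Collecting the $s_{(i,j,k)}$ into a single multiindex $\sigma \in \mathds{N}^{D}$, the per-coordinate constraints become exactly $\beta + \gamma + \sigma_{[0]} = \alpha$, since $\sigma_{[0],k} = \sum_{i,j} s_{(i,j,k)}$. Taking the product over $k$ then yields a sum over all admissible triples $(\beta,\gamma,\sigma)$.

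The key step is to identify the monomial and the coefficient produced by a given $(\beta,\gamma,\sigma)$. For the monomial, I would collect the total power of each $x_{m}$: it receives $\beta_{m}$ from the linear term and $\sum_{j,k} s_{(m,j,k)} = \sigma_{[1],m}$ from the commutator terms in which $x_{m}$ occurs, so the $x$-monomial is $x^{\beta + \sigma_{[1]}}$; symmetrically the $y$-monomial is $y^{\gamma + \sigma_{[2]}}$. For the coefficient, the factorials arising from the $d$ separate multinomial expansions multiply to
\[
\prod_{k} \frac{\alpha_{k}!}{\beta_{k}!\, \gamma_{k}!\, \prod_{i,j} s_{(i,j,k)}!} = \frac{\alpha!}{\beta!\, \gamma!\, \sigma!},
\]
and since $\gamma = \alpha - \beta - \sigma_{[0]}$ this is precisely ${\alpha \choose \beta}_{\sigma}$ as defined in (\ref{gmc}). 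The scalar factors $(\frac{1}{2} a_{i,j,k})^{s_{(i,j,k)}}$ multiply to $2^{-\sum_{i,j,k} s_{(i,j,k)}} \prod_{i,j,k} a_{i,j,k}^{s_{(i,j,k)}} = c_{\sigma}$, matching (\ref{consts}).

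I expect the only genuine difficulty to be organizational: keeping the three derived multiindices $\sigma_{[0]}, \sigma_{[1]}, \sigma_{[2]}$ straight and verifying that the factorials telescope correctly across the $d$ coordinate-wise expansions. No overlap problem arises, because the homogeneity constraints ($\max(i,j) \leq d_{1} < k$ whenever $(i,j,k) \in D$) guarantee that a linear variable $x_{k}$ or $y_{k}$ with $k > d_{1}$ never reappears inside any commutator term, so the accounting of exponents is unambiguous. Once the identification of monomial and coefficient is made for a single $(\beta,\gamma,\sigma)$, summing over all admissible triples yields (\ref{prmain}).
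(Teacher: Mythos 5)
Your proposal is correct and follows essentially the same route as the paper: expand $(x\circ y)^{\alpha}=\prod_{k}(x\circ y)_{k}^{\alpha_{k}}$ coordinatewise by the multinomial theorem, track the exponents $\beta_{k},\gamma_{k},s_{(i,j,k)}$, and collect monomials and coefficients into $\binom{\alpha}{\beta}_{\sigma}c_{\sigma}\,x^{\beta+\sigma_{[1]}}y^{\gamma+\sigma_{[2]}}$. The only cosmetic difference is that the paper performs the expansion in two stages (first a trinomial expansion in $x_{k},y_{k},r_{k}(x,y)$, then a multinomial expansion of $r_{k}(x,y)^{\tau_{k}}$), whereas you do a single multinomial expansion; the resulting coefficients $\frac{\alpha_{k}!}{\beta_{k}!\,\gamma_{k}!\,\prod_{i,j}s_{(i,j,k)}!}$ are identical.
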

\begin{proof}
Let $\alpha \in \mathds{N}^{d}$. We have
\begin{align} \label{xident}
(x \circ y)^{\alpha}&=\prod_{k=1}^{d}(x \circ y)_{k}^{\alpha_{k}}
=\prod_{l=1}^{d_{1}}(x_{l}+y_{l})^{\alpha_{l}}
\prod_{k=d_{1}+1}^{d}(x_{k}+y_{k}+r_{k}(x,y))^{\alpha_{k}}
\\ \nonumber
&= \prod_{l=1}^{d_{1}} \sum_{\beta_{l}+\gamma_{l} = \alpha_{l}} {\alpha_{l} \choose \beta_{l}} x_{l}^{\beta_{l}}y_{l}^{\gamma_{l}}
\prod_{k=d_{1}+1}^{d} \sum_{\substack{\beta_{k}+\gamma_{k} \\ +\tau_{k}=\alpha_{k}}} {\alpha_{k} \choose \beta_{k} \gamma_{k} \tau_{k}} x_{k}^{\beta_{k}} y_{k}^{\gamma_{k}}r_{k}(x,y)^{\tau_{k}}
\\ \nonumber
&=\sum_{\substack{{\{\beta_{l}+\gamma_{l} = \alpha_{l}:} \\ 1 \leq l \leq d_{1}\}}} \sum_{\substack{\{\beta_{k}+\gamma_{k}+\tau_{k}=\alpha_{k}: \\ d_{1}+1 \leq k \leq d\}}}
\prod_{l=1}^{d_{1}} {\alpha_{l} \choose \beta_{l}} \prod_{k=d_{1}+1}^{d} {\alpha_{k} \choose \beta_{k} \gamma_{k} \tau_{k}}
\\ \nonumber
& \times \prod_{l=1}^{d_{1}} x_{l}^{\beta_{l}} \prod_{k=d_{1}+1}^{d} x_{k}^{\beta_{k}}
\prod_{l=1}^{d_{1}} y_{l}^{\gamma_{l}} \prod_{k=d_{1}+1}^{d} y_{k}^{\gamma_{k}} 
\prod_{k=d_{1}+1}^{d} r_{k}(x,y)^{\tau_{k}}. 
\end{align}
Let $D_{k}=\{(i,j): (i,j,k) \in D\}$. Clearly, $(i,j) \in D_{k}$ if $a_{i,j,k} \neq 0$. Thus, 
\begin{align} \label{restttw}
r_{k}(x,y)^{\tau_{k}} &= \left( {1 \over 2} \sum_{(i,j)\in D_{k}} a_{i,j,k} x_{i}y_{j} \right)^{\tau_{k}}
\\ \nonumber
&=2^{-\tau_{k}} \sum_{\sum_{(i,j)\in D_{k}}\tau_{k,i,j}=\tau_{k}}  {\tau_{k} \choose ...\tau_{k,i,j}...} \prod_{(i,j) \in D_{k}} (a_{i,j,k} x_{i}y_{j})^{\tau_{k,i,j}}
\\ \nonumber
&=2^{-\tau_{k}} \sum_{\sum_{(i,j)\in D_{k}} \tau_{k,i,j}=\tau_{k}}  {\tau_{k} \choose ...\tau_{k,i,j}...}
\prod_{(i,j)\in D_{k}} a_{i,j,k}^{\tau_{k,i,j}}
\prod_{(i,j)\in D_{k}} x_{i}^{\tau_{k,i,j}}  y_{j}^{\tau_{k,i,j}}.
\end{align}
Here, ${\tau_{k} \choose ...\tau_{k,i,j}...}$ denotes a multinomial coefficient
\begin{equation*}
{\tau_{k} \choose ...\tau_{k,i,j}...} = {\tau_{k}! \over \prod_{(i,j) \in D_{k}} \tau_{k,i,j}!}.
\end{equation*}
By using (\ref{restttw}), the expression from (\ref{xident}) is equal to
\begin{align} \label{identx}
&\sum_{\substack{\{\beta_{l} + \gamma_{l} = \alpha_{l}: \\ 1 \leq l \leq d_{1}\}}} \sum_{\substack{\{\beta_{k}+\gamma_{k}+\tau_{k}=\alpha_{k}: \\ d_{1}+1 \leq k \leq d\}}}
\sum_{\sum_{(i,j)\in D_{k}}\tau_{k,i,j}=\tau_{k}}
\\ \nonumber
&\prod_{l=1}^{d_{1}} {\alpha_{l} \choose \beta_{l}}
\prod_{k=d_{1}+1}^{d} \left( {\alpha_{k} \choose \beta_{k} \gamma_{k} \tau_{k}}
{\tau_{k} \choose ...\tau_{k,i,j}...} 2^{-\tau_{k}} \prod_{(i,j) \in D_{k}}  
 a_{i,j,k}^{\tau_{k,i,j}} \right)
\\ \nonumber
\times &\prod_{l=1}^{d_{1}} x_{l}^{\beta_{l}}
\prod_{k=d_{1}+1}^{d} \left( x_{k}^{\beta_{k}}
\prod_{(i,j) \in D_{k}} x_{i}^{\tau_{k,i,j}} \right)
\prod_{l=1}^{d_{1}} y_{l}^{\gamma_{l}}
\prod_{k=d_{1}+1}^{d} \left( y_{k}^{\gamma_{k}}
\prod_{(i,j)\in D_{k}} y_{j}^{\tau_{k,i,j}} \right).
\end{align} 
If we denote $\sigma_{(i,j,k)}=\tau_{k,i,j}$, then $\sigma \in \mathds{N}^{D}$. Moreover,
\begin{equation*}
\prod_{l=1}^{d_{1}} {\alpha_{l} \choose \beta_{l}} \prod_{k=d_{1}+1}^{d} {\alpha_{k} \choose \beta_{k} \gamma_{k} \sigma_{k}} {\sigma_{k} \choose ...\sigma_{(i,j,k)}...}
= {\alpha! \over \beta! \gamma! \sigma!}
={\alpha \choose \beta}_{\sigma}.
\end{equation*}
The conditions $\beta_{l} + \gamma_{l} = \alpha_{l}$, $l=1,2,...,d_{1}$ and $\sum_{(i,j)\in D_{k}}\tau_{k,i,j}=\tau_{k}$, $\beta_{k}+\gamma_{k}+\tau_{k}=\alpha_{k}$, $k=d_{1}+1,...,d$ we can simply write as $\beta+\gamma+\sigma_{[0]} = \alpha$.
Recall that the numbers $c_{\sigma}$ are given by (\ref{consts}).
Thus, (\ref{identx}) is equal to
\begin{align*}
&\sum_{\beta + \gamma + \sigma_{[0]} = \alpha} {\alpha \choose \beta}_{\sigma} c_{\sigma}
\prod_{k=1}^{d} \left( x_{k}^{\beta_{k}}
\prod_{(i,j) \in D_{k}} x_{i}^{\sigma_{(i,j,k)}} \right)
\prod_{k=1}^{d} \left( y_{k}^{\gamma_{k}}
\prod_{(i,j) \in D_{k}} y_{j}^{\sigma_{(i,j,k)}} \right)
\\ \nonumber
=&\sum_{\beta + \gamma + \sigma_{[0]} = \alpha} {\alpha \choose \beta}_{\sigma} c_{\sigma}
\prod_{i=1}^{d} x_{i}^{\beta_{i}+\sum_{j,k:(i,j)\in D_{k}}\sigma_{(i,j,k)}}
\prod_{j=1}^{d} y_{j}^{\gamma_{j}+\sum_{i,k:(i,j)\in D_{k}}\sigma_{(i,j,k)}}
\\ \nonumber
=&\sum_{\beta+\gamma+\sigma_{[0]} = \alpha} {\alpha \choose \beta}_{\sigma} c_{\sigma}
x^{\beta+\sigma_{[1]}}
y^{\gamma+\sigma_{[2]}}.
\end{align*}
\end{proof}
 
\subsection{Convolution rule.}
\begin{proof}[Proof of Theorem \ref{thmaith}]
By (\ref{congr}) we have
\begin{equation} \label{conmt}
T^{\alpha}(f*g)(x)=x^{\alpha}(f*g)(x)=\int_{\mathfrak{g}} x^{\alpha} f(x \circ y^{-1}) g(y) dy.
\end{equation}
Applying the formula (\ref{prmain}) we get
\begin{equation} \label{iden}
x^{\alpha} = ((x \circ y^{-1}) \circ y)^{\alpha}
=\sum_{\beta+\gamma+\sigma_{[0]} = \alpha }
{\alpha \choose \beta}_{\sigma}
c_{\sigma}
(x \circ y^{-1})^{\beta+\sigma_{[1]}}
y^{\gamma +\sigma_{[2]}}.
\end{equation}
The thesis follows from combining (\ref{iden}) and (\ref{conmt}). 
\end{proof}

As a consequence, we get the relationship between exponents $\beta+\sigma_{[1]}$ and $\gamma+\sigma_{[2]}$ on the right hand side in (\ref{thmain}) in terms of homogeneous length, as in the formula (\ref{gloconv}).
\begin{cor}
The formula (\ref{gloconv}) holds.
\end{cor}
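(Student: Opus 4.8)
The plan is to derive (\ref{gloconv}) directly from the explicit expansion (\ref{thmain}) by tracking homogeneous lengths. Writing $\beta' = \beta + \sigma_{[1]}$ and $\gamma' = \gamma + \sigma_{[2]}$ for the exponents appearing on the right of (\ref{thmain}), I must show that each summand satisfies $l(\beta') + l(\gamma') = l(\alpha)$, that exactly one summand equals $T^{\alpha}f * g$ and exactly one equals $f * T^{\alpha}g$, and that every other summand has $0 < l(\beta') < l(\alpha)$.

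First I would record the consequences of the two-step structure. By the constraints on the structure constants stated in Section \ref{sectwo} (namely $a_{i,j,k} = 0$ unless $\max(i,j) \le d_{1}$ and $k > d_{1}$), every index triple $(i,j,k) \in D$ has $i,j \le d_{1}$ and $k > d_{1}$. Hence $\sigma_{[1]}$ and $\sigma_{[2]}$ are supported on the degree-one block $\{1,\dots,d_{1}\}$, while $\sigma_{[0]}$ is supported on the degree-two block $\{d_{1}+1,\dots,d\}$. Setting $s = \sum_{i,j,k}\sigma_{(i,j,k)}$ one has $\sum_{i}\sigma_{[1],i} = \sum_{j}\sigma_{[2],j} = \sum_{k}\sigma_{[0],k} = s$, so from the definition of $l$ I obtain $l(\sigma_{[1]}) = l(\sigma_{[2]}) = s$ and $l(\sigma_{[0]}) = 2s$.

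Next, using the additivity $l(\mu+\nu) = l(\mu) + l(\nu)$ together with the constraint $\beta + \gamma + \sigma_{[0]} = \alpha$ from (\ref{thmain}), I would compute
\begin{equation*}
l(\beta') + l(\gamma') = l(\beta) + l(\gamma) + l(\sigma_{[1]}) + l(\sigma_{[2]}) = l(\beta) + l(\gamma) + 2s = l(\beta + \gamma + \sigma_{[0]}) = l(\alpha),
\end{equation*}
which is the homogeneous-length relation in (\ref{gloconv}). It remains to isolate the endpoint terms. Since $l$ is a positive combination of the nonnegative entries, $l(\beta') = 0$ forces $\beta' = 0$, i.e. $\beta = 0$ and $\sigma_{[1]} = 0$; the latter gives $s = 0$, hence $\sigma = 0$ and then $\gamma = \alpha$, so this is the single triple $(\beta,\gamma,\sigma) = (0,\alpha,0)$, producing $f * T^{\alpha}g$ with coefficient ${\alpha \choose 0}_{0} c_{0} = 1$. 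Symmetrically $\gamma' = 0$ singles out $(\alpha,0,0)$, giving $T^{\alpha}f * g$ with coefficient $1$. Every remaining triple has $\beta' \neq 0$ and $\gamma' \neq 0$, i.e. $0 < l(\beta') < l(\alpha)$; collecting all such summands sharing a common pair $(\beta',\gamma')$ and letting $c_{\beta',\gamma'}$ be the sum of their coefficients ${\alpha \choose \beta}_{\sigma} c_{\sigma}$ yields precisely (\ref{gloconv}).

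I expect no serious obstacle: the whole argument is bookkeeping on homogeneous lengths once the support properties of $\sigma_{[0]}, \sigma_{[1]}, \sigma_{[2]}$ are established. The only point requiring care is the uniqueness of the two endpoint triples --- that $\beta' = 0$ (resp. $\gamma' = 0$) can occur only when $\sigma = 0$ --- which rests on the fact that $\sigma_{[1]}$ and $\sigma_{[2]}$ vanish exactly when $\sigma$ does, a feature special to the two-step case where each $\sigma$-index contributes equally to all three reductions.
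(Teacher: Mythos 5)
Your proof is correct and follows essentially the same route as the paper: both derive (\ref{gloconv}) from Theorem \ref{thmaith} by checking that $l(\beta+\sigma_{[1]})+l(\gamma+\sigma_{[2]})=l(\alpha)$, using that the two-step structure places $\sigma_{[1]},\sigma_{[2]}$ in the degree-one block and $\sigma_{[0]}$ in the degree-two block. You are in fact somewhat more thorough than the paper, whose proof records only the length identity and leaves implicit the isolation of the endpoint terms $T^{\alpha}f*g$ and $f*T^{\alpha}g$ and the verification that all remaining terms satisfy $0<l(\beta+\sigma_{[1]})<l(\alpha)$.
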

\begin{proof}
Let $\beta+\gamma+\sigma_{[0]}=\alpha$, where $\alpha,\beta,\gamma \in \mathds{N}^{d}$, $\sigma \in \mathds{N}^{D}$. By a direct calculation,
\begin{multline*}
l(\beta+\sigma_{[1]})+l(\gamma+\sigma_{[2]})
= \sum_{i=1}^{d_{1}} ( \beta_{i}+\sum_{j,k} \sigma_{(i,j,k)} ) + 2\sum_{k=d_{1}+1}^{d}\beta_{k} 
\\ \nonumber
+ \sum_{j=1}^{d_{1}} ( \gamma_{j}+\sum_{i,k}\sigma_{(i,j,k)} ) +2\sum_{k=d_{1}+1}^{d} \gamma_{k} 
=\sum_{i=k}^{d_{1}}\alpha_{k} + 2\sum_{i=d_{1}+1}^{d} \alpha_{k} = l(\alpha).
\end{multline*}
\end{proof}
If we compare the coefficients on the both sides of the formulas $T^{\alpha^{1}+\alpha^{2}}(f*g) = T^{\alpha_{1}}(T^{\alpha_{2}}(f*g))$, obtained from Theorem $\ref{thmaith}$, we get the following identity.
\begin{cor}
For any $\alpha^{1}, \alpha^{2}, \beta \in \mathds{N}^{d}$, $\sigma \in \mathds{N}^{D}$,
\begin{equation} \label{pasru}
{\alpha_{1}+\alpha_{2} \choose \beta}_{\sigma} = \sum_{b(\alpha_{1},\alpha_{2},\beta,\sigma)} {\alpha^{1} \choose \beta^{1}}_{\sigma^{1}} {\alpha^{2} \choose \beta^{2}}_{\sigma^{2}},
\end{equation}
where $b(\alpha_{1},\alpha_{2},\beta,\sigma)$ is the set
\begin{equation*}
\{(\beta^{1},\beta^{2},\sigma^{1},\sigma^{2}): \beta^{1}+\beta^{2}=\beta, \sigma^{1}+\sigma^{2}=\sigma, \beta^{1}+\sigma^{1}_{[0]} \leq \alpha^{1}, \beta^{2}+\sigma^{2}_{[0]} \leq  \alpha^{2} \}.
\end{equation*}
\end{cor}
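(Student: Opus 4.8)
The plan is to make precise the coefficient comparison suggested by the identity $T^{\alpha^{1}+\alpha^{2}}(f*g)=T^{\alpha^{1}}(T^{\alpha^{2}}(f*g))$, but to carry it out at the level of the \emph{formal} expansion underlying Proposition \ref{prmainpr} rather than directly on the convolution products. The reason for this detour is the obstacle described in the last paragraph, so the whole proof hinges on choosing the right object to compare coefficients of.

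First I would record the generating-polynomial interpretation of ${\alpha \choose \beta}_{\sigma}$ that is already implicit in the proof of Proposition \ref{prmainpr}. Introduce independent indeterminates $X_{1},\dots,X_{d}$, $Y_{1},\dots,Y_{d}$ and $Z_{(i,j,k)}$ for $(i,j,k)\in D$, and for each coordinate $m$ set $P_{m}=X_{m}+Y_{m}$ if $m\le d_{1}$ and $P_{m}=X_{m}+Y_{m}+\sum_{(i,j)\in D_{m}}Z_{(i,j,m)}$ if $m>d_{1}$, where $D_{m}=\{(i,j):(i,j,m)\in D\}$. Expanding $\prod_{m=1}^{d}P_{m}^{\alpha_{m}}$ by the ordinary multinomial theorem in each factor, which is exactly the computation (\ref{xident})--(\ref{restttw}) with the commutator terms left as the variables $Z$, shows that the coefficient of the monomial $X^{\beta}Y^{\gamma}Z^{\sigma}:=\prod_{m}X_{m}^{\beta_{m}}\prod_{m}Y_{m}^{\gamma_{m}}\prod_{(i,j,k)\in D}Z_{(i,j,k)}^{\sigma_{(i,j,k)}}$ equals ${\alpha \choose \beta}_{\sigma}$ whenever $\gamma=\alpha-\beta-\sigma_{[0]}\ge 0$, and is $0$ otherwise. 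Here the homogeneous structure of $\mathfrak{g}$ is used: since $a_{i,j,k}=0$ unless $i,j\le d_{1}<k$, the variables $Z$ occur only in the second-layer factors, so $\sigma_{[0]}$ is supported on the second layer and the condition $\beta+\gamma+\sigma_{[0]}=\alpha$ is precisely the statement that the exponents add up correctly in each factor $P_{m}^{\alpha_{m}}$.

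Second I would apply multiplicativity of this generating polynomial. Since $\prod_{m}P_{m}^{\alpha^{1}_{m}+\alpha^{2}_{m}}=\bigl(\prod_{m}P_{m}^{\alpha^{1}_{m}}\bigr)\bigl(\prod_{m}P_{m}^{\alpha^{2}_{m}}\bigr)$, the coefficient of $X^{\beta}Y^{\gamma}Z^{\sigma}$ on the left, namely ${\alpha^{1}+\alpha^{2} \choose \beta}_{\sigma}$, equals the Cauchy-product sum over factorizations of the monomial, i.e. over $\beta^{1}+\beta^{2}=\beta$, $\gamma^{1}+\gamma^{2}=\gamma$, $\sigma^{1}+\sigma^{2}=\sigma$, of ${\alpha^{1} \choose \beta^{1}}_{\sigma^{1}}{\alpha^{2} \choose \beta^{2}}_{\sigma^{2}}$. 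By the first step a term survives only when $\gamma^{1}=\alpha^{1}-\beta^{1}-\sigma^{1}_{[0]}\ge 0$ and $\gamma^{2}=\alpha^{2}-\beta^{2}-\sigma^{2}_{[0]}\ge 0$, i.e. when $\beta^{1}+\sigma^{1}_{[0]}\le\alpha^{1}$ and $\beta^{2}+\sigma^{2}_{[0]}\le\alpha^{2}$; granting these, $\gamma^{1},\gamma^{2}$ are forced and automatically satisfy $\gamma^{1}+\gamma^{2}=\gamma$, so that constraint may be dropped. The surviving index set is exactly $b(\alpha^{1},\alpha^{2},\beta,\sigma)$, which gives (\ref{pasru}).

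The main obstacle, and the reason the comparison must be done in the formal variables $Z$ rather than on $T^{\beta+\sigma_{[1]}}f*T^{\gamma+\sigma_{[2]}}g$, is that the assignment $(\beta,\sigma)\mapsto(\beta+\sigma_{[1]},\gamma+\sigma_{[2]})$ is \emph{not} injective: $\sigma_{[1]},\sigma_{[2]}$ are supported on the first layer while $\sigma_{[0]}$ is supported on the second, so a resulting exponent pair determines only $\sigma_{[0]}$ and $\sigma_{[1]}+\sigma_{[2]}$, not $\sigma$ itself. Hence a naive matching of the coefficient of a fixed product $T^{\mu}f*T^{\nu}g$ in $T^{\alpha^{1}+\alpha^{2}}(f*g)$ and in $T^{\alpha^{1}}(T^{\alpha^{2}}(f*g))$ would only yield the $c_{\sigma}$-weighted sum of (\ref{pasru}) over all $(\beta,\sigma)$ with $\beta+\sigma_{[1]}=\mu$, $\gamma+\sigma_{[2]}=\nu$, a strictly coarser identity. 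Keeping the commutator terms as independent indeterminates restores the bijection between monomials and triples $(\beta,\gamma,\sigma)$ and thereby separates these contributions; substituting $Z_{(i,j,k)}=\tfrac{1}{2}a_{i,j,k}x_{i}y_{j}$ at the very end recovers the analytic content, but the combinatorial identity (\ref{pasru}) is sharpest before this substitution is made.
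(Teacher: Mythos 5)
Your proof is correct, but it takes a genuinely different and in fact more careful route than the paper. The paper derives (\ref{pasru}) in one line, by expanding both sides of $T^{\alpha^{1}+\alpha^{2}}(f*g)=T^{\alpha^{1}}(T^{\alpha^{2}}(f*g))$ via Theorem \ref{thmaith} and ``comparing coefficients'' of the resulting terms $T^{\mu}f*T^{\nu}g$. As you point out, that comparison is not immediate: the exponent pair $(\mu,\nu)=(\beta+\sigma_{[1]},\gamma+\sigma_{[2]})$ need not determine $(\beta,\gamma,\sigma)$, and even where it does one must still know that the bilinear maps $(f,g)\mapsto T^{\mu}f*T^{\nu}g$ are linearly independent before equating their weights; the paper leaves both points implicit. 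Your device of keeping the commutator contributions as independent indeterminates $Z_{(i,j,k)}$ turns the statement into a transparent polynomial identity --- the coefficient of $X^{\beta}Y^{\gamma}Z^{\sigma}$ in $\prod_{m}P_{m}^{\alpha_{m}}$ is exactly ${\alpha \choose \beta}_{\sigma}$ when $\beta+\gamma+\sigma_{[0]}=\alpha$ and $0$ otherwise, and multiplicativity of $\prod_{m}P_{m}^{\alpha_{m}}$ in $\alpha$ gives (\ref{pasru}) by the Cauchy product, with the support conditions $\beta^{m}+\sigma^{m}_{[0]}\leq\alpha^{m}$ falling out as the survival conditions for the factors. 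This buys a self-contained combinatorial proof independent of the convolution identity and of any injectivity or independence considerations, at the cost of introducing the auxiliary generating polynomial; the paper's route is shorter but, taken literally, proves only the $c_{\sigma}$-weighted aggregate of (\ref{pasru}) over fibers of $(\beta,\sigma)\mapsto(\beta+\sigma_{[1]},\gamma+\sigma_{[2]})$ unless supplemented by an argument of the kind you supply. Your first step is also consistent with the computation (\ref{xident})--(\ref{restttw}) in the proof of Proposition \ref{prmainpr}, so the two approaches share the same underlying expansion.
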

Notice that this is the analog of the combinatorial identity
\begin{equation*}
{n_{1}+n_{2} \choose k} = \sum_{\substack{k_{1}+k_{2}=k \\ k_{1} \leq n_{1}, k_{2} \leq n_{2}}} {n_{1} \choose k_{1}} {n_{2} \choose k_{2}}, \qquad n_{1}, n_{2}, k \in \mathds{N}.
\end{equation*}

In the similar way as in Theorem \ref{thmaith} we can find a convolution rule for more than two functions.
Before that, we extend a bit our notation.
For $n \in \mathds{N}$ let
\begin{equation*}
D^{(n)}= \{(i,j,k,r,s): a_{i,j,k} \neq 0, \ \ 1 \leq r < s \leq n\}.
\end{equation*}
Notice that if $n=2$, then $D^{(2)}$ is essentially the same as $D$. 
For $\tau \in \mathds{N}^{D^{(n)}}$ we denote the multiindices in $\mathds{N}^{d}$
\begin{flalign*}
&\tau_{[0],k}= \sum_{i,j,r,s} \tau_{(i,j,k,r,s)}, \quad\ k=1,...,d
\\ \nonumber 
&\tau_{[m],l}= \sum_{j,k,s} \tau_{(l,j,k,m,s)}
+\sum_{i,k,r} \tau_{(i,l,k,r,m)}, \qquad\ m =1,...,n, l=1,...,d.
\end{flalign*} 
For $\alpha, \beta^{1},...,\beta^{n} \in \mathds{N}^{d}, \tau \in \mathds{N}^{D^{(n)}}$ and $\sum_{m=1}^{n} \beta^{m} + \tau_{[0]} = \alpha$ we denote also
\begin{equation*}
{\alpha \choose \beta^{1}...\beta^{n}}_{\tau} = {\alpha! \over \beta^{1}! ... \beta^{n}! \tau!},
\qquad
\tilde{c}_{\tau} = 2^{-\sum_{i,j,k,r,s} \tau_{(i,j,k,r,s)}} \prod_{i,j,k,r,s} a_{i,j,k}^{\tau_{(i,j,k,r,s)}}.
\end{equation*}

\begin{prop} \label{encon}
Let $f_{1},...,f_{n}$ be Schwartz functions on $\mathfrak{g}$. For every $\alpha \in \mathds{N}^{d}$,
\begin{equation}
T^{\alpha} (f_{1}*...*f_{n}) = \sum_{\beta^{1}+...+\beta^{n}+\tau_{[0]}=\alpha}
{\alpha \choose \beta^{1}...\beta^{n}}_{\tau} \tilde{c}_{\tau}
T^{\beta^{1}+\tau_{[1]}}f_{1}*...*T^{\beta^{n}+\tau_{[n]}}f_{n}.
\end{equation}
\end{prop}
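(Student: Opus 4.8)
The plan is to mimic the two-factor argument: first establish an $n$-factor version of the multinomial theorem of Proposition \ref{prmainpr}, and then insert it into the iterated convolution, exactly as the proof of Theorem \ref{thmaith} inserted (\ref{prmain}) into (\ref{conmt}). The key algebraic input is a closed formula for $(u_{1} \circ \cdots \circ u_{n})^{\alpha}$ in the variables $u_{1},\dots,u_{n} \in \mathfrak{g}$. Because $\mathfrak{g}$ is two-step nilpotent, every iterated Campbell--Hausdorff bracket of length $\geq 3$ vanishes, so an easy induction on $n$ (writing $u_{1} \circ \cdots \circ u_{n} = (u_{1} \circ \cdots \circ u_{n-1}) \circ u_{n}$ and discarding the double brackets) gives $u_{1} \circ \cdots \circ u_{n} = \sum_{m=1}^{n} u_{m} + \frac{1}{2} \sum_{1 \leq r < s \leq n} [u_{r}, u_{s}]$. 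In coordinates this reads $(u_{1} \circ \cdots \circ u_{n})_{k} = \sum_{m} (u_{m})_{k}$ for $k \leq d_{1}$, while for $k > d_{1}$,
\[
(u_{1} \circ \cdots \circ u_{n})_{k} = \sum_{m=1}^{n} (u_{m})_{k} + R_{k}, \qquad R_{k} = \frac{1}{2} \sum_{1 \leq r<s\leq n} \sum_{i,j} a_{i,j,k} (u_{r})_{i} (u_{s})_{j}.
\]
This $R_{k}$ is the exact analogue of $r_{k}(x,y)$, now carrying an extra pair index $(r,s)$, which is precisely why the index set $D^{(n)}$ records the two coordinates $1 \leq r < s \leq n$.

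The combinatorial heart then parallels (\ref{xident})--(\ref{identx}) verbatim. I would expand $(u_{1} \circ \cdots \circ u_{n})^{\alpha} = \prod_{k} (\cdots)_{k}^{\alpha_{k}}$ by applying the ordinary multinomial theorem coordinatewise: for $k \leq d_{1}$ an $n$-term expansion in the $(u_{m})_{k}$, and for $k > d_{1}$ an $(n+1)$-term expansion in $(u_{1})_{k},\dots,(u_{n})_{k}$ and $R_{k}$, producing multiindices $\beta^{1},\dots,\beta^{n}$ together with exponents $\tau_{k}$ of $R_{k}$. Expanding each $R_{k}^{\tau_{k}}$ as in (\ref{restttw}) and setting $\tau_{(i,j,k,r,s)} = \tau_{k,i,j,r,s}$ produces $\tau \in \mathds{N}^{D^{(n)}}$, the constant $\tilde{c}_{\tau}$, and, after multiplying the multinomial coefficients, the coefficient $\alpha!/(\beta^{1}! \cdots \beta^{n}! \, \tau!) = {\alpha \choose \beta^{1} \cdots \beta^{n}}_{\tau}$. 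The coordinatewise constraints combine to the single condition $\sum_{m} \beta^{m} + \tau_{[0]} = \alpha$, since $\tau_{[0],k}$ vanishes automatically for $k \leq d_{1}$.

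The one genuinely new piece of bookkeeping --- and the step I expect to be the main obstacle --- is collecting the total exponent of a fixed $u_{m}$. In coordinate $l$ the element $u_{m}$ picks up $\beta^{m}_{l}$ from the linear part, $\sum_{j,k,s} \tau_{(l,j,k,m,s)}$ from appearing in the first slot ($r = m$, contributing its $i = l$ coordinate) of the bracket terms, and $\sum_{i,k,r} \tau_{(i,l,k,r,m)}$ from appearing in the second slot ($s = m$, contributing its $j = l$ coordinate). The total is exactly $\beta^{m}_{l} + \tau_{[m],l}$, matching the definition of $\tau_{[m]}$; care is needed to keep the two roles of $u_{m}$ distinct. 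This yields the $n$-factor identity
\[
(u_{1} \circ \cdots \circ u_{n})^{\alpha} = \sum_{\beta^{1} + \cdots + \beta^{n} + \tau_{[0]} = \alpha} {\alpha \choose \beta^{1} \cdots \beta^{n}}_{\tau} \tilde{c}_{\tau} \, u_{1}^{\beta^{1} + \tau_{[1]}} \cdots u_{n}^{\beta^{n} + \tau_{[n]}}.
\]

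Finally I would write the iterated convolution, using associativity and (\ref{congr}), as $(f_{1} * \cdots * f_{n})(x) = \int \cdots \int f_{1}(u_{1}) \cdots f_{n}(u_{n}) \, du_{2} \cdots du_{n}$ with $u_{1} = x \circ u_{n}^{-1} \circ \cdots \circ u_{2}^{-1}$, so that $u_{1} \circ \cdots \circ u_{n} = x$. Substituting $x^{\alpha} = (u_{1} \circ \cdots \circ u_{n})^{\alpha}$, applying the identity above, and reading each summand $u_{1}^{\beta^{1} + \tau_{[1]}} \cdots u_{n}^{\beta^{n} + \tau_{[n]}}$ back as the integrand of $T^{\beta^{1} + \tau_{[1]}} f_{1} * \cdots * T^{\beta^{n} + \tau_{[n]}} f_{n}$ yields the stated formula.
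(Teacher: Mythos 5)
Your proposal is correct and follows essentially the same route as the paper: establish the $n$-fold multinomial identity $(y^{1}\circ\cdots\circ y^{n})^{\alpha}=\sum{\alpha \choose \beta^{1}\cdots\beta^{n}}_{\tau}\tilde{c}_{\tau}(y^{1})^{\beta^{1}+\tau_{[1]}}\cdots(y^{n})^{\beta^{n}+\tau_{[n]}}$ from $y^{1}\circ\cdots\circ y^{n}=\sum_{m}y^{m}+\frac{1}{2}\sum_{r<s}[y^{r},y^{s}]$, then substitute the telescoping variables of the iterated convolution (the paper uses $y^{m}=x^{m}\circ(x^{m+1})^{-1}$, which is the same device as your $u_{1}=x\circ u_{n}^{-1}\circ\cdots\circ u_{2}^{-1}$). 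Your bookkeeping of the exponent of each $u_{m}$ via the two slots $r=m$ and $s=m$ correctly reproduces the definition of $\tau_{[m]}$, and in fact supplies details the paper leaves to the reader.
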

\begin{proof}
In a similar fashion as in the proof of Proposition \ref{prmainpr} we find a formula for $(y^{1}\circ y^{2} \circ...\circ y^{n})^{\alpha}$, where $y^{1},...,y^{n} \in \mathfrak{g}$. We get
\begin{align} \label{ncon}
(y^{1}\circ y^{2} \circ...\circ y^{n})^{\alpha}&=\prod_{k=1}^{d}(y^{1}_{k}+...+y^{n}_{k}+{1 \over 2} \sum_{a_{i,j,k} \neq 0} a_{i,j,k} \sum_{r<s} y^{r}_{i}y^{s}_{j})^{\alpha_{k}}
\\ \nonumber
&= \sum_{\beta^{1}+...+\beta^{n}+\tau_{[0]}=\alpha}
{\alpha \choose \beta^{1}...\beta^{n}}_{\tau} \tilde{c}_{\tau}
(y^{1})^{\beta^{1}+\tau_{[1]}}...(y^{n})^{\beta^{n}+\tau_{[n]}}.
\end{align}
If we apply (\ref{ncon}) to the elements $y^{1}=x^{1}\circ(x^{2})^{-1},...,y^{n-1}=x^{n-1}\circ (x^{n})^{-1}$, $y^{n}=x^{n}$, where $x^{1},...,x^{n}$ are integral variables in the convolution, we get the thesis.
\end{proof}

\subsection{$\mathcal{S}$-convolvers.}
Let $A$ be a tempered distribution on $\mathfrak{g}$, i.e. a linear, continuous functional on $\mathcal{S}(\mathfrak{g})$. The convolution on the right by a tempered distribution $A$ with a Schwartz function $f$ on $\mathfrak{g}$ is defined by
\begin{equation*} 
f*A(x) = \langle A, \widetilde{f}_{x} \rangle, 
\end{equation*}
where $\widetilde{f}_{x}(y)=f(xy^{-1})$. $\widetilde{A}$ denotes the distribution given by $\langle \widetilde{A}, f \rangle = \langle A, \widetilde{f} \rangle$.
We say that a distribution $A \in \mathcal{S}'(\mathfrak{g})$ is a right $\mathcal{S}$-convolver on a nilpotent Lie group $\mathfrak{g}$ if $f*A \in \mathcal{S}(\mathfrak{g})$, whenever $f \in \mathcal{S}(\mathfrak{g})$. We define the space of left $\mathcal{S}$-convolvers in a similar way. $A$ is called an $\mathcal{S}$-convolver if it is both left and right $\mathcal{S}$-convolver. 
By Proposition 2.5 in Corwin \cite{Cor}, the space of $\mathcal{S}$-convolvers is closed under convolution and multiplication by polynomials. We have
\begin{equation*} 
f*(A*B)=(f*A)*B, \quad \langle A*B, f \rangle = \langle B, \widetilde{A}*f \rangle.
\end{equation*}

The formula (\ref{thmain}) is valid also for $\mathcal{S}$-convolvers instead of Schwartz functions on a two-step nilpotent Lie group.

\begin{cor} \label{cormain}
If $A$, $B$ are $\mathcal{S}$-convolvers on $\mathfrak{g}$, then,
\begin{equation} \label{mulcons}
T^{\alpha}(A*B)
=\sum_{\beta+\gamma+\sigma_{[0]} = \alpha }
{\alpha \choose \beta}_{\sigma}
c_{\sigma}
T^{\beta+\sigma_{[1]}}A
*T^{\gamma +\sigma_{[2]}}B.
\end{equation}
\end{cor}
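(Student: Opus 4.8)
The plan is to reduce (\ref{mulcons}) to the Schwartz case already settled in Theorem \ref{thmaith}. First I would record that both sides of (\ref{mulcons}) are again $\mathcal{S}$-convolvers: the space of $\mathcal{S}$-convolvers is closed under convolution and under multiplication by polynomials, so $T^{\alpha}(A*B)$ and each summand $T^{\beta+\sigma_{[1]}}A*T^{\gamma+\sigma_{[2]}}B$ make sense and lie in that space. The argument proceeds in two stages: an intermediate \emph{mixed} identity in which one factor is a genuine Schwartz function and the other an $\mathcal{S}$-convolver, followed by a regularization that replaces one $\mathcal{S}$-convolver by a Schwartz approximation.

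For the mixed stage, let $f \in \mathcal{S}(\mathfrak{g})$ and let $B$ be an $\mathcal{S}$-convolver. Writing the right convolution as $f*B(x)=\langle B_{y},f(x\circ y^{-1})\rangle$, I would multiply by $x^{\alpha}$ and insert the purely algebraic expansion of Proposition \ref{prmainpr} in the form $x^{\alpha}=((x\circ y^{-1})\circ y)^{\alpha}$, exactly as in the proof of Theorem \ref{thmaith}, but now inside the distributional pairing rather than inside an integral. Since for fixed $x$ the map $y\mapsto f(x\circ y^{-1})$ and its products with polynomials are Schwartz in $y$, this substitution is legitimate. Collecting terms, the factor $(x\circ y^{-1})^{\beta+\sigma_{[1]}}$ is absorbed into $f$ to produce $T^{\beta+\sigma_{[1]}}f$, while the polynomial $y^{\gamma+\sigma_{[2]}}$ is absorbed into the distribution via $\langle B_{y},y^{\gamma+\sigma_{[2]}}h(y)\rangle=\langle (T^{\gamma+\sigma_{[2]}}B)_{y},h(y)\rangle$. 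This yields the mixed identity
\begin{equation*}
T^{\alpha}(f*B)=\sum_{\beta+\gamma+\sigma_{[0]}=\alpha}{\alpha\choose\beta}_{\sigma}c_{\sigma}\,T^{\beta+\sigma_{[1]}}f*T^{\gamma+\sigma_{[2]}}B,
\end{equation*}
each term being a Schwartz function because $T^{\beta+\sigma_{[1]}}f\in\mathcal{S}(\mathfrak{g})$ and $T^{\gamma+\sigma_{[2]}}B$ is again an $\mathcal{S}$-convolver.

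For the second stage I would fix an approximate identity $(\psi_{n})\subset\mathcal{S}(\mathfrak{g})$, i.e.\ $\psi_{n}\to\delta$ in $\mathcal{S}'(\mathfrak{g})$, set $A_{n}=\psi_{n}*A\in\mathcal{S}(\mathfrak{g})$ (legitimate since $A$ is a right $\mathcal{S}$-convolver), and apply the mixed identity to $A_{n}*B$. It then remains to let $n\to\infty$. On the left, associativity gives $A_{n}*B=\psi_{n}*(A*B)\to A*B$ in $\mathcal{S}'(\mathfrak{g})$, and multiplication by the polynomial $x^{\alpha}$ is continuous on $\mathcal{S}'(\mathfrak{g})$, so $T^{\alpha}(A_{n}*B)\to T^{\alpha}(A*B)$. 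On the right, $T^{\beta+\sigma_{[1]}}A_{n}\to T^{\beta+\sigma_{[1]}}A$ in $\mathcal{S}'(\mathfrak{g})$, and convolution on the right by the fixed $\mathcal{S}$-convolver $C=T^{\gamma+\sigma_{[2]}}B$ is sequentially continuous on $\mathcal{S}'(\mathfrak{g})$, being the adjoint of $f\mapsto f*\widetilde{C}$ (which maps $\mathcal{S}(\mathfrak{g})$ into itself), via the companion $\langle u*C,f\rangle=\langle u,f*\widetilde{C}\rangle$ of the duality $\langle A*B,f\rangle=\langle B,\widetilde{A}*f\rangle$ recalled above. Passing to the limit in the finite sum then gives (\ref{mulcons}).

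The main obstacle is precisely this limit passage, and it is the reason for mollifying only one of the two arguments. Had I regularized both $A$ and $B$ simultaneously, the right-hand side would require convolving two distributions that both vary with $n$, where joint continuity is delicate; by keeping one factor a fixed $\mathcal{S}$-convolver and letting only a Schwartz factor vary, the convergence reduces to the clean duality statement above. The algebraic heart of the matter is unchanged from Theorem \ref{thmaith}: it is carried entirely by Proposition \ref{prmainpr}, reread inside a distributional pairing.
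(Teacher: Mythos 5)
Your argument is correct, but it follows a genuinely different route from the paper's. The paper proceeds by induction on $|\alpha|$: it first establishes the length-one cases $T_{k}(A*B)$ for $k\leq d_{1}$ and $k>d_{1}$ separately, each time by dualizing ($\langle T_{k}(A*B),f\rangle=\langle B,\widetilde{A}*T_{k}f\rangle$), applying the already-proved Schwartz-function formula (\ref{thmain}) to $\widetilde{A}*T_{k}f$, and running the argument twice (once for $A$ Schwartz, once for $A$ a convolver using the just-obtained length-one rule); the inductive step is then carried entirely by the combinatorial identity (\ref{pasru}). You instead avoid induction altogether: you prove the full mixed identity for arbitrary $\alpha$ in one stroke by rereading Proposition \ref{prmainpr} inside the pairing $\langle B_{y},f(x\circ y^{-1})\rangle$, and then upgrade the Schwartz factor to an $\mathcal{S}$-convolver by mollification and a weak-$*$ limit. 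The paper's route never leaves the duality formalism it sets up and needs no limiting argument, at the price of the induction and of (\ref{pasru}); your route is more direct but imports two extra analytic facts that you should state explicitly: (i) the approximate identity must satisfy $\widetilde{\psi}_{n}*f\to f$ in the topology of $\mathcal{S}(\mathfrak{g})$ (weak-$*$ convergence $\psi_{n}\to\delta$ alone does not suffice to pass to the limit on the left-hand side), which holds for dilates of a fixed bump function on a homogeneous group; and (ii) the consistency of your ``companion'' duality $\langle u*C,f\rangle=\langle u,f*\widetilde{C}\rangle$ with the paper's definition $\langle A*B,f\rangle=\langle B,\widetilde{A}*f\rangle$ when both factors are convolvers, which is a routine Fubini-type check. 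With those two points made explicit, your proof is complete and arguably yields the mixed identity in a cleaner, non-inductive way.
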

\begin{proof}
We prove (\ref{mulcons}) by the induction on the length of $\alpha$.
Let $T^{e_{k}}=T_{k}, k=1,...,d_{1}$. Suppose at first that $A$ is a Schwartz function. Then,
\begin{equation*}
\langle T_{k}(A*B), f \rangle = \langle A*B, T_{k}f \rangle
=\langle B, \widetilde{A}*T_{k}f \rangle.
\end{equation*}
By (\ref{thmain}), it is equal to
\begin{equation*}
\langle B, T_{k}(\widetilde{A}*f) - T_{k}\widetilde{A}*f \rangle
= \langle T_{k}B, \widetilde{A}*f \rangle - \langle B, T_{k}\widetilde{A}*f \rangle.
\end{equation*}
As $\widetilde{T_{k}\widetilde{A}}=-T_{k}A$, the first step is done, when $A$ is a Schwartz function. If $A$ is an $\mathcal{S}$-convolver, then we can repeat the same reasoning using the just proven formula
\begin{equation*}
T_{k}(f*A)=T_{k}f*A+f*T_{k}A, \quad\ f \in \mathcal{S}(\mathfrak{h}),
\end{equation*}
instead of the case $\alpha=e_{k}$ in (\ref{thmain}).

Now, let $T^{e_{k}}=T_{k}$, $k=d_{1}+1,...,d$. If $A$ is a Schwartz function, then
\begin{align*}
\langle T_{k}(A*B), f \rangle &= \langle A*B, T_{k}f \rangle
=\langle B, \widetilde{A}*T_{k}f \rangle
\\
&=\langle B, T_{k}(\widetilde{A}*f) - T_{k}\widetilde{A}*f - {1 \over 2} \sum_{(i,j)\in D_{k}} a_{i,j,k} T_{i}\widetilde{A}* T_{j}f \rangle
\\
&= \langle T_{k}B, \widetilde{A}*f \rangle + \langle T_{k}A*B \rangle + {1 \over 2} \sum_{(i,j)\in D_{k}} a_{i,j,k} \langle T_{i}A*B , T_{j}f \rangle.
\end{align*}
We get $\sum_{(i,j)\in D_{k}} a_{i,j,k} T_{j}T_{i}A = 0$ from the antisymmetry of the structure constants on $\mathfrak{g}$ and then
\begin{equation} \label{mulcons1}
T_{k}(A*B)=T_{k}A*B+A*T_{k}B+{1 \over 2}\sum_{(i,j)\in D_{k}} a_{i,j,k} T_{i}A*T_{j}B,
\end{equation}
whenever $A$ is a Schwartz function. Similarly to the case $T^{e_{k}}$, for $k=1,...,d_{1}$, we obtain that (\ref{mulcons1}) also holds when $A$ is an $\mathcal{S}$-convolver.

Now, assume that the formula (\ref{mulcons}) holds for a multiindex $\alpha$. The inductive step follows from the formula $(\ref{pasru})$.
\end{proof}
 
\subsection{Leibniz's rule for the product $f \# g$.} \label{secfifo}
Applying the Fourier transform to (\ref{thmain}) we get an equivalent formula for the derivatives of the product $f \# g$ as follows.
\begin{cor} \label{derlei}
If $\alpha \in \mathds{N}^{d}$ and $f,g \in \mathcal{S}(\mathfrak{g}^{*})$, then
\begin{equation} \label{cortwi}
D^{\alpha}(f \# g)
=\sum_{\beta + \gamma + \sigma_{[0]}= \alpha }
{\alpha \choose \beta}_{\sigma}
c_{\sigma}
D^{\beta+\sigma_{[1]}}f
\# D^{\gamma +\sigma_{[2]}}g,
\end{equation}
where the constants $c_{\sigma}$ are given by (\ref{consts}).
\end{cor}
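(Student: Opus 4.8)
The plan is to obtain (\ref{cortwi}) directly from Theorem \ref{thmaith} by transporting that identity through the Abelian Fourier transform, exploiting the fact that, under the normalization $D_{j}=i\partial_{j}$, the transform $\wedge$ intertwines the operators $T^{\mu}$ and $D^{\mu}$ with no extraneous signs. First I would record the basic intertwining identity: differentiating under the integral sign in $\widehat{h}(\xi)=\int_{\mathfrak{g}}e^{-ix\xi}h(x)\,dx$ gives $D_{j}\widehat{h}=\widehat{T_{j}h}$, and iterating yields
\begin{equation*}
(T^{\mu}h)^{\wedge}=D^{\mu}\widehat{h},\qquad h\in\mathcal{S}(\mathfrak{g}),\ \mu\in\mathds{N}^{d}.
\end{equation*}
Since $\vee$ and $\wedge$ are mutually inverse, this reads equivalently as $(T^{\mu}f^{\vee})^{\wedge}=D^{\mu}f$ for $f\in\mathcal{S}(\mathfrak{g}^{*})$. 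I would also note the dual form of the definition of $\#$: because $f\#g=(f^{\vee}*g^{\vee})^{\wedge}$, one has $(u*v)^{\wedge}=u^{\wedge}\#v^{\wedge}$ for $u,v\in\mathcal{S}(\mathfrak{g})$.

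Next I would apply these to $D^{\alpha}(f\#g)$. Writing $f\#g=(f^{\vee}*g^{\vee})^{\wedge}$ and invoking the intertwining identity with $h=f^{\vee}*g^{\vee}$,
\begin{equation*}
D^{\alpha}(f\#g)=D^{\alpha}\bigl((f^{\vee}*g^{\vee})^{\wedge}\bigr)=\bigl(T^{\alpha}(f^{\vee}*g^{\vee})\bigr)^{\wedge}.
\end{equation*}
Now $f^{\vee},g^{\vee}\in\mathcal{S}(\mathfrak{g})$, so Theorem \ref{thmaith} expands $T^{\alpha}(f^{\vee}*g^{\vee})$ as the sum over $\beta+\gamma+\sigma_{[0]}=\alpha$ of the terms ${\alpha\choose\beta}_{\sigma}c_{\sigma}\,T^{\beta+\sigma_{[1]}}f^{\vee}*T^{\gamma+\sigma_{[2]}}g^{\vee}$. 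Applying $\wedge$ term by term and using $(u*v)^{\wedge}=u^{\wedge}\#v^{\wedge}$ followed by $(T^{\mu}f^{\vee})^{\wedge}=D^{\mu}f$ converts each summand $T^{\beta+\sigma_{[1]}}f^{\vee}*T^{\gamma+\sigma_{[2]}}g^{\vee}$ into $D^{\beta+\sigma_{[1]}}f\#D^{\gamma+\sigma_{[2]}}g$, which is precisely the right-hand side of (\ref{cortwi}).

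Because every step is an exact identity on the Schwartz class, there are no convergence or remainder terms to control, and the constants $c_{\sigma}$ and the coefficients ${\alpha\choose\beta}_{\sigma}$ from (\ref{consts}) are carried over unchanged. The only point that genuinely requires care is the bookkeeping of the Fourier conventions: I would verify that the normalization $D_{j}=i\partial_{j}$ together with the sign in the kernel $e^{-ix\xi}$ makes the intertwining $(T^{\mu}\,\cdot\,)^{\wedge}=D^{\mu}(\,\cdot\,)^{\wedge}$ strictly sign-free, so that no spurious factors of $i$ or $(-1)^{|\mu|}$ contaminate the coefficients. This is the main thing to check, and it follows at once from the one-variable computation $D_{j}\widehat{h}=\widehat{T_{j}h}$ above; everything else is the mechanical transport of Theorem \ref{thmaith} across $\wedge$.
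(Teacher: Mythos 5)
Your proposal is correct and is exactly the route the paper takes: Corollary \ref{derlei} is stated there with only the one-line remark that it follows by applying the Fourier transform to (\ref{thmain}), and your argument simply fills in the standard details (the intertwining $(T^{\mu}h)^{\wedge}=D^{\mu}\widehat{h}$ under the convention $D_{j}=i\partial_{j}$, and $(u*v)^{\wedge}=u^{\wedge}\#v^{\wedge}$). The sign check you flag is the only point of substance and it works out cleanly, so nothing is missing.
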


The above formula is valid under some weaker smoothness conditions for functions, what is essential for applying these results for a better understanding of the symbolic calculus on two-step nilpotent Lie groups.

Let $\textbf{m}_{1}$, $\textbf{m}_{2}$ be $\textbf{g}$-weights on $\mathfrak{g}^{*}$ (for more details see G{\l}owacki \cite{Glo3}) and 
\begin{equation*}
S^{\textbf{m}}(\mathfrak{g}^{*},\textbf{g}) = \{a \in C^{\infty}(\mathfrak{g}^{*}): |D^{\alpha} a(x)| \leq \textbf{m}(x) \rho(x)^{-l(\alpha)}\},
\end{equation*}
where $\rho(x)=1+\|x\|$, $\|\cdot\|$ being the homogeneous norm on $\mathfrak{g}$. A typical example of weight is $\textbf{m}(x)=\rho(x)^{N}$, $N \in \mathds{R}$.
Notice that if a distribution $A$ satisfies $\widehat{A} \in S^{\textbf{m}}(\mathfrak{g}^{*},\textbf{g})$ for some weight $\textbf{m}$, then one can write $A$ as a sum of a tempered distribution with compact support and a Schwartz function. Thus $A$ is an $\mathcal{S}$-convolver on $\mathfrak{g}$.
If $a \in S^{\textbf{m}_{1}}(\mathfrak{g}^{*},\textbf{g})$ and $b \in S^{\textbf{m}_{2}}(\mathfrak{g}^{*},\textbf{g})$, then, by the calculus by G{\l}owacki \cite{Glo3}, we have $a \# b \in S^{\textbf{m}_{1}\textbf{m}_{2}}(\mathfrak{g}^{*},\textbf{g})$ and a certain continuity of the product $\#$, which is sufficient to draw as a conclusion from Corollary \ref{cormain} the following.
\begin{cor}
The formula (\ref{cortwi}) holds for functions $a,b$ such that $a^{\vee}$, $b^{\vee}$ are $\mathcal{S}$-colvolvers on $\mathfrak{g}$.
In particular, if $a \in S^{\textbf{m}_{1}}(\mathfrak{g}^{*},\textbf{g})$ and $b \in S^{\textbf{m}_{2}}(\mathfrak{g}^{*},\textbf{g})$, then $D^{\alpha}(a \# b)$ is given by (\ref{cortwi}),
which also can be understood pointwise.
\end{cor}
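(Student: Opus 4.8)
The plan is to deduce (\ref{cortwi}) for such $a,b$ directly from the convolution identity (\ref{mulcons}) of Corollary \ref{cormain} by transporting it through the Fourier transform, exactly as Corollary \ref{derlei} was obtained from Theorem \ref{thmaith} in the Schwartz case. Writing $A=a^{\vee}$ and $B=b^{\vee}$, the hypothesis is precisely that $A,B$ are $\mathcal{S}$-convolvers on $\mathfrak{g}$, so $A*B$ is again an $\mathcal{S}$-convolver and $a\#b=(A*B)^{\wedge}$ is well defined. The two facts I would record first are the intertwining relations for the Fourier transform on tempered distributions: $\widehat{T_{j}h}=D_{j}\widehat{h}$, hence $\widehat{T^{\delta}h}=D^{\delta}\widehat{h}$ for every $\delta\in\mathds{N}^{d}$, which is immediate from the chosen normalization $D_{j}=i\partial_{j}$; and the product--convolution duality $(h_{1}*h_{2})^{\wedge}=\widehat{h_{1}}\#\widehat{h_{2}}$, which is nothing but the definition $f\#g=(f^{\vee}*g^{\vee})^{\wedge}$ read with $h_{1}=f^{\vee}$, $h_{2}=g^{\vee}$.

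Next I would apply $\wedge$ to both sides of (\ref{mulcons}) with $\alpha$ fixed. On the left, $\widehat{T^{\alpha}(A*B)}=D^{\alpha}(A*B)^{\wedge}=D^{\alpha}(a\#b)$. On the right, each summand $T^{\beta+\sigma_{[1]}}A*T^{\gamma+\sigma_{[2]}}B$ transforms, by the product--convolution duality, into $(T^{\beta+\sigma_{[1]}}A)^{\wedge}\#(T^{\gamma+\sigma_{[2]}}B)^{\wedge}$, and then the relation $\widehat{T^{\delta}h}=D^{\delta}\widehat{h}$ together with $\widehat{A}=a$, $\widehat{B}=b$ turns this into $D^{\beta+\sigma_{[1]}}a\#D^{\gamma+\sigma_{[2]}}b$. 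Since the coefficients ${\alpha\choose\beta}_{\sigma}c_{\sigma}$ and the summation range $\beta+\gamma+\sigma_{[0]}=\alpha$ are untouched, the image of (\ref{mulcons}) is exactly (\ref{cortwi}). At this level (\ref{cortwi}) is an identity of tempered distributions on $\mathfrak{g}^{*}$.

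For the particular case $a\in S^{\textbf{m}_{1}}(\mathfrak{g}^{*},\textbf{g})$, $b\in S^{\textbf{m}_{2}}(\mathfrak{g}^{*},\textbf{g})$ I would first invoke the remark preceding the statement: since $\widehat{a^{\vee}}=a$ lies in a symbol class, $a^{\vee}$ splits as a compactly supported distribution plus a Schwartz function and is therefore an $\mathcal{S}$-convolver, and likewise for $b^{\vee}$; hence the general part above applies. It then remains to upgrade the distributional equality to a pointwise one. Here I would use G{\l}owacki's calculus \cite{Glo3}: every factor $D^{\beta+\sigma_{[1]}}a$ again belongs to $S^{\textbf{m}_{1}}(\mathfrak{g}^{*},\textbf{g})$ (differentiation does not enlarge the class, since $\rho\geq 1$ gives $\rho^{-l(\beta+\sigma_{[1]})}\leq 1$), and similarly $D^{\gamma+\sigma_{[2]}}b\in S^{\textbf{m}_{2}}(\mathfrak{g}^{*},\textbf{g})$, so each product $D^{\beta+\sigma_{[1]}}a\#D^{\gamma+\sigma_{[2]}}b$ and the left side $a\#b$ lie in $S^{\textbf{m}_{1}\textbf{m}_{2}}(\mathfrak{g}^{*},\textbf{g})$ and are in particular smooth functions. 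Two tempered distributions that are simultaneously continuous functions and agree as distributions agree at every point, which gives the pointwise reading.

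The step I expect to be \emph{delicate} is the rigorous justification of the intertwining relations in the generality of $\mathcal{S}$-convolvers rather than Schwartz functions: $\widehat{T^{\delta}h}=D^{\delta}\widehat{h}$ holds for all tempered distributions and is harmless, but the duality $(A*B)^{\wedge}=\widehat{A}\#\widehat{B}$ must be read through the extension of $\#$ to the symbol classes, i.e. one must check that this extended $\#$ is consistent with the convolution of convolvers supplied by Corwin's Proposition 2.5 in \cite{Cor}, and that the right-hand side of (\ref{cortwi}) is meaningful (each $D^{\beta+\sigma_{[1]}}a$ is the transform of the convolver $T^{\beta+\sigma_{[1]}}a^{\vee}$, so its $\#$-product with $D^{\gamma+\sigma_{[2]}}b$ is defined). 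The continuity of $\#$ on the symbol classes asserted from \cite{Glo3} is exactly what closes this gap and simultaneously secures the pointwise statement.
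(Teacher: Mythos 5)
Your proposal is correct and follows essentially the same route the paper intends: the paper gives no written proof for this corollary, but derives it exactly as you do, by applying the Fourier transform to the $\mathcal{S}$-convolver identity (\ref{mulcons}) of Corollary \ref{cormain} and invoking the preceding remarks that symbols in $S^{\textbf{m}}(\mathfrak{g}^{*},\textbf{g})$ have $\mathcal{S}$-convolver inverse transforms together with the stability and continuity of $\#$ from G{\l}owacki's calculus to read the identity pointwise. Your write-up merely makes explicit the intertwining relations and the smoothness argument that the paper leaves implicit.
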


\subsection{Heisenberg group.} \label{secfifi}
The Heisenberg group$/$algebra $\mathfrak{h}_{n}$ was introduced in Section \ref{secone}. Let us recall that the multiplication on $\mathfrak{h}_{n}$ is given by
\begin{equation} \label{multheis} 
x\circ y=(x_{1}+y_{1},...,x_{2n}+y_{2n},x_{2n+1}+y_{2n+1}+{1 \over 2} \{x,y\}).
\end{equation}

There is the remarkable relationship between the convolution structure of the Heisenberg group and the Weyl calculus for pseudodifferential operators, which was explained in, e.g., Howe \cite{How}. For $\lambda=1$ in (\ref{heishas}) one obtains the Weyl formula for the symbol of the composition of two pseudodifferential operators (cf. G{\l}owacki \cite{Glo4}, Example 3.3)
\begin{equation*}
a \#_{W} b (\xi) = \int \int a(\xi+u)b(\xi+v)e^{i\{u,v\}} du dv.
\end{equation*} 
It is easy to see that a formula for $D^{\alpha}(a \#_{W} b)$ is given by (noncommutative) Leibniz's rule
\begin{eqnarray} \label{lozen}
D^{\alpha}(a \#_{W} b) = \sum_{\beta + \gamma = \alpha} {\alpha \choose \beta} D^{\alpha}a \#_{W} D^{\gamma}b.
\end{eqnarray}\

Let $f,g \in \mathcal{S}(\mathfrak{h}_{n})$. It is directly checked that for $i=1,...,2n$
\begin{equation*} 
T_{i}(f*g)=T_{i}f*g+f*T_{i}g.
\end{equation*}
If $\alpha \in \mathds{N}^{2n+1}$ and $\alpha_{2n+1}=0$, then
\begin{eqnarray*}
T^{\alpha}(f*g)=\sum_{\beta + \gamma = \alpha} {\alpha \choose \beta} T^{\alpha}f*T^{\gamma}g,
\end{eqnarray*}
which is corresponding to (\ref{lozen}).
On the other hand, by the relation
\begin{equation*} 
x_{2n+1}=(x \circ y^{-1})_{2n+1}+ y_{2n+1} +
{1 \over 2} \sum_{i=1}^{n} ((x \circ y^{-1})_{i} y_{n+i} - (x \circ y^{-1})_{n+i} y_{i})
\end{equation*}
we get also that (cf. the formula (\ref{tnpo}))
\begin{equation*} 
T_{2n+1}(f*g)=T_{2n+1}f*g+f*T_{2n+1}g+ {1 \over 2}\sum_{i=1}^{n} (T_{i}f*T_{n+i}g-T_{n+i}f*T_{i}g).
\end{equation*}
Higher order formulas are more complicated, for instance
\begin{align*} 
T^{2}_{2n+1}(f*g)=&T^{2}_{2n+1}f*g+f*T^{2}_{2n+1}g + 2T_{2n+1}f*T_{2n+1}g
\\ +&\sum_{i=1}^{n} (T_{2n+1}T_{i}f*T_{n+i}g+T_{i}f*T_{2n+1}T_{n+i}g 
\\
-&T_{2n+1}T_{n+i}f*T_{i}g-T_{n+i}f*T_{2n+1}T_{i}g)
\\
+&{1 \over 4}\sum_{i=1}^{n}\sum_{j=1}^{n} (T_{j}T_{i}f*T_{n+j}T_{n+i}g-T_{n+j}T_{i}f*T_{j}T_{n+i}g
\\
-&T_{j}T_{n+i}f*T_{n+j}T_{i}g+T_{n+j}T_{n+i}f*T_{j}T_{i}g).
\end{align*}

We find a general formula for $T_{2n+1}^{k}(f*g)$, $k \in \mathds{N}$, as a conclusion from Theorem \ref{thmaith}. Let us first illustrate the notation by using them in the case of the Heisenberg group.
The matrix $A$ is given by
\begin{equation*}
a_{i,n+i,2n+1} = 1, \quad a_{n+i,i,2n+1} = -1, \quad i=1,...,n,
\end{equation*}
and $a_{i,j,k}=0$, otherwise. We have
\begin{equation*}
D=\{(1,n+1,2n+1),...,(n,2n,2n+1),(n+1,1,2n+1),...,(2n,n,2n+1)\}.
\end{equation*}
Let $\sigma \in \mathds{N}^{D}$. Then $\sigma_{[1]}$, $\sigma_{[2]}$, $\sigma_{[0]}$ are given by
\begin{flalign*}
&\sigma_{[1]}= (\sigma_{(1,n+1,2n+1)},...,\sigma_{(2n,n,2n+1)}, 0), \
\sigma_{[2]}= (\sigma_{(n+1,1,2n+1)},...,\sigma_{(n,2n,2n+1)}, 0),
\\
&\sigma_{[0]}= (0,...,0, \sum_{i=1}^{n}(\sigma_{(i,n+i,2n+1)}+\sigma_{(n+i,i,2n+1)})).
\end{flalign*} 
If $\sigma_{c}=\sigma_{[0],2n+1}$,
then $T_{2n+1}^{k}(f*g)$, $k \in \mathds{N}$, is given by
\begin{multline*}
T^{k}_{2n+1}(f*g)
=\sum_{\substack{\{l,m \in \mathds{N}, \sigma \in \mathds{N}^{D}: \\ l+m+\sigma_{c} = k\}}}
{k! \over l!m! \sigma!} 2^{-\sigma_{c}} (-1)^{\sum_{i=1}^{n}\sigma_{(n+i,i,2n+1)}}
\\ \nonumber
T_{1}^{\sigma_{(1,n+1,2n+1)}}...T_{2n}^{\sigma_{(2n,n,2n+1)}}T_{2n+1}^{l}f
*T_{1}^{\sigma_{(n+1,1,2n+1)}}...T_{2n}^{\sigma_{(n,2n,2n+1)}}T_{2n+1}^{m}g.
\end{multline*}

As in the procedure described in Subsection \ref{secfifo}, one gets an extension of the rule for $\mathcal{S}$-convolvers on $\mathfrak{h}_{n}$ and a formula for the derivatives of the product $a \# b$.

\subsection*{Acknowledgements.}
The author is grateful to P. G{\l}owacki and M. Preisner for their helpful remarks on the subject of the present paper. He also thanks the referee for his useful suggestions.

\end{document}